\newtheorem{theorem}{Theorem}[section]
\newtheorem*{theorem*}{Theorem}
\newtheorem{lemma}[theorem]{Lemma}
\newtheorem{observation}[theorem]{Observation}
\newtheorem{conjecture}[theorem]{Conjecture}
\newcounter{cclaim}
\newtheorem{claim}[cclaim]{Claim}
\newcommand{\claimproofend}{\hspace*{.1mm}\hspace{\fill}}
\newenvironment{claimproof}{}{\claimproofend\par\vspace{2mm}}
\newcommand{\PP}{\mathcal P}
\newcommand{\SSS}{\mathcal S}
\newcommand{\fig}[1]{\includegraphics[page=#1]{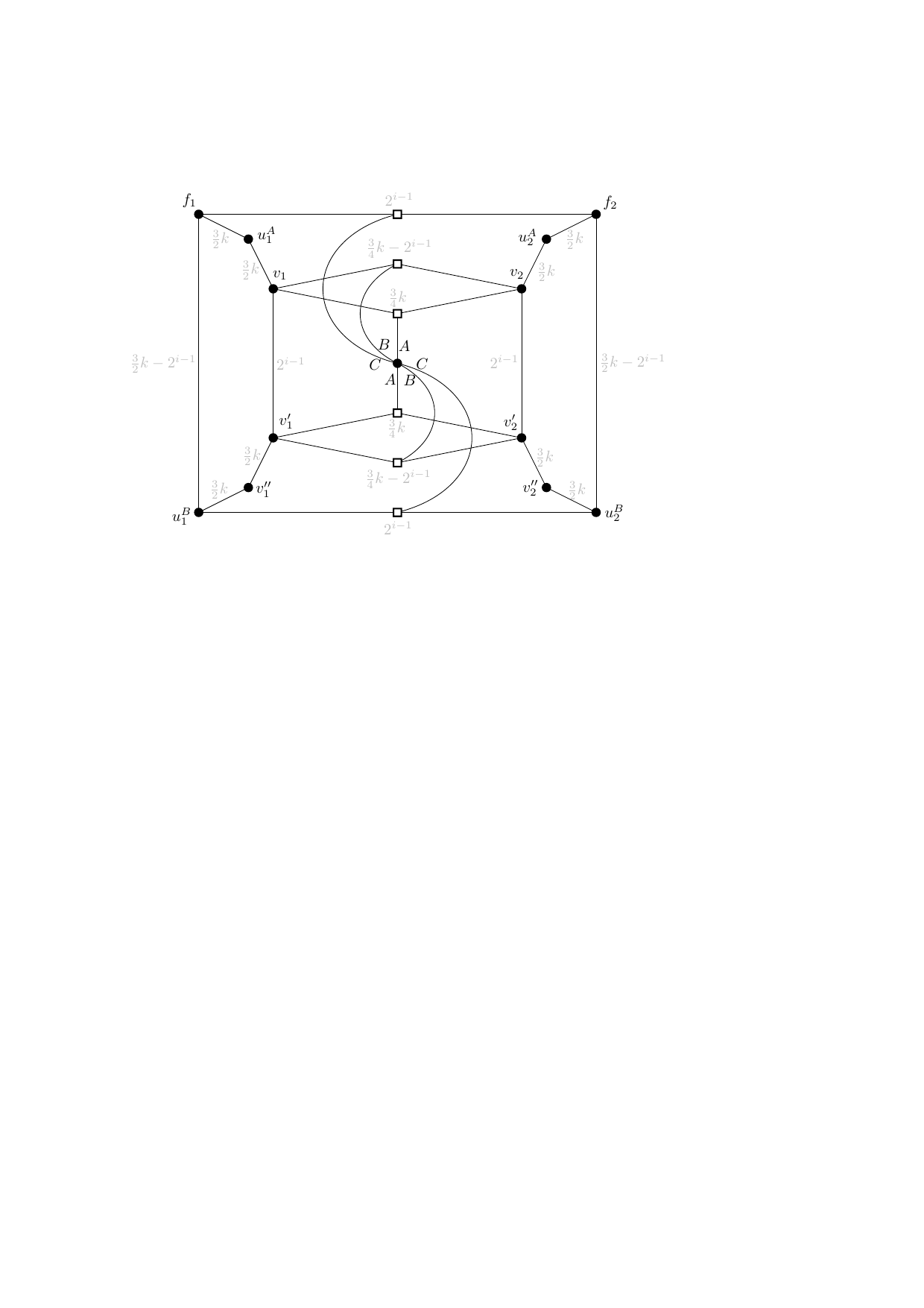}}
\newcommand{\size}[1]{\left|#1\right|}
\newcommand\Setx[1] {\left\{{#1}\right\}}
\newcommand{\bdx}[2]{\partial_{#1}({#2})}
\newcommand{\AAA}A%{\textsc{a}}
\newcommand{\BB}B%{\textsc{b}}
\newcommand{\CC}C%{\textsc{c}}
\begin{document}
\title{\textbf{Packing $T$-connectors in graphs needs more connectivity}}
\author{Roman \v{C}ada$^{\:1}$ \and Adam Kabela$^{\:1}$ \and Tom\'a\v
  s Kaiser$^{\:1}$\and Petr Vr\'{a}na$^{\:1}$}

\date{}

\maketitle
\begin{abstract}
  Strengthening the classical concept of Steiner trees, West and Wu
  [J. Combin. Theory Ser. B 102 (2012), 186--205] introduced the
  notion of a $T$-connector in a graph $G$ with a set $T$ of
  terminals. They conjectured that if the set $T$ is
  $3k$-edge-connected in $G$, then $G$ contains $k$ edge-disjoint
  $T$-connectors. We disprove this conjecture by constructing
  infinitely many counterexamples for $k=1$ and for each even $k$.
\end{abstract}
\footnotetext[1]{Department of Mathematics and European Centre of
  Excellence NTIS (New Technologies for the Information Society),
  University of West Bohemia, Pilsen, Czech Republic. E-mail:
  \texttt{\{cadar,kabela,kaisert,vranap\}@kma.zcu.cz}. Supported by
  project GA20-09525S of the Czech Science Foundation.}%

% Connectivity:
% \begin{enumerate}
% \item prove things with half-edges
% \item for $F_1,\dots,F_{\ell-1}$: $3k$-connectedness simple
% \item $F^*_\ell$: cuts of size $2k$, $5k/2$ but can be completed (by
%   half-edges?) to $3k$
% \item substitute $F^*_\ell$ into mega-graph: big cubic [NH bipartite
%   cubic] $\to$ $k$-fold edges, which is $3k$-connected, mind
%   connection of $F^*_\ell$ half-edges 
% \end{enumerate}

\section{Introduction}
\label{sec:introduction}

Let $G$ be a graph and $T \subseteq V(G)$ a set of vertices called
\emph{terminals}. The vertices in $V(G)\setminus T$ are the
\emph{non-terminals}. For brevity, we may call the pair $(G,T)$ a
\emph{graph with terminals}.

A \emph{$T$-tree} (or \emph{Steiner tree}) in $G$ is a tree whose
vertex set includes every vertex from $T$. Kriesell~\cite{K}
investigated the existence of edge-disjoint $T$-trees in relation to
the connectivity of $T$ in $G$, defined as follows. The set $T$ is
said to be \emph{$k$-edge-connected} in $G$ if $G$ contains no
edge-cut of size smaller than $k$ separating two vertices from
$T$. Note that this entails no assumption about the connectivity of
$G$ itself. Kriesell~\cite{K} conjectured that if $T$ is
$2k$-edge-connected in $G$, then $G$ contains $k$ edge-disjoint
$T$-trees. This conjecture is still open but DeVos, McDonald and
Pivotto~\cite{DMP} proved that the conclusion holds if $T$ is
$(5k+4)$-connected in $G$ (see also \cite{L,WW} for earlier results).

In this paper, we deal with related objects called $T$-connectors,
introduced by West and Wu~\cite{WW}. Recall first that a
\emph{$T$-path} in $G$ is a path whose both endvertices belong to $T$
but which is otherwise disjoint from $T$. The operation of
\emph{short-cutting} a nontrivial path $P$ consists in removing all
edges of $P$ and adding an edge joining its endvertices. A
\emph{$T$-connector} in $G$ is the union of a family of edge-disjoint
$T$-paths in $G$ such that if we shortcut the $T$-paths one by one, we
obtain a graph whose induced subgraph on $T$ is connected. Improving
upon a result of~\cite{WW}, DeVos et al.~\cite{DMP} proved that $G$
has $k$ edge-disjoint $T$-connectors provided that $T$ is
$(6k+6)$-edge-connected in $G$.

On the other hand, West and Wu~\cite{WW} constructed examples of
graphs with terminals $(G,T)$ such that $T$ is $(3k-1)$-edge-connected
in $G$, but $G$ does not admit $k$ edge-disjoint $T$-connectors. They
conjectured that higher edge-connectivity of $T$ already implies the existence
of $k$ edge-disjoint $T$-connectors.
\begin{conjecture}\label{conj:west-wu}
  Let $k$ be a positive integer and $(G,T)$ a graph with terminals. If
  $T$ is $3k$-edge-connected in $G$, then $G$ admits $k$ pairwise
  edge-disjoint $T$-connectors.
\end{conjecture}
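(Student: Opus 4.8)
As the abstract indicates, Conjecture~\ref{conj:west-wu} is in fact \emph{false}, so what follows is a plan for disproving it: the aim is to construct, for $k=1$ and for every even $k$, a graph with terminals $(G,T)$ in which $T$ is $3k$-edge-connected but $G$ contains no $k$ pairwise edge-disjoint $T$-connectors. (It suffices to reach connectivity exactly $3k$, just clearing the threshold in the conjecture and improving on West and Wu's $(3k-1)$-edge-connected examples.) The construction will be modular: build a small \emph{gadget} $F$ glued to the rest of $G$ along a bounded-size edge cut, assemble several copies of $F$ together with a few ordinary vertices into $G$, and then argue separately about the edge-connectivity of $T$ and about the non-existence of $k$ connectors.

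The first step is a workable reformulation of ``$T$-connector''. Shortcutting a $T$-path replaces it by an edge between its two terminal endpoints, so a $T$-connector is determined by its underlying subgraph $H\subseteq G$ together with a decomposition of $H$ into edge-disjoint $T$-paths whose endpoint pairs span a connected multigraph on $T$. In any such $H$ every non-terminal has even degree and every terminal has positive degree; but, as the example $H=K_{1,4}$ (center a non-terminal, leaves four terminals) shows, it is \emph{not} enough for $H$ to be connected and span $T$ --- the only decomposition into $T$-paths pairs the leaves into two $T$-paths and leaves the shortcut graph on $T$ disconnected. So the first task is to characterize exactly which subgraphs $H$, and with which distributions of terminal degrees, actually host a $T$-connector. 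This characterization is the combinatorial engine of the argument, and it is where parity enters.

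Armed with it, the gadget $F$ must be engineered so that simultaneously: (i) $F$ is highly edge-connected between its attachment vertices --- enough that no edge cut of $G$ of size below $3k$ separates two terminals --- yet (ii) every possible ``trace'' that a $T$-connector can leave on $F$ forces its $T$-paths to enter and leave $F$ in a pattern that uses strictly more boundary edges than the naive ``one crossing per connector'' bound; that is, from the outside $F$ looks like a place where each connector is compelled to make a wasteful detour. Summing the resulting local lower bounds over the boundary cuts of all copies of $F$ --- an $\mathbb{F}_2$-/$T$-join-style global count --- should then show that $k$ edge-disjoint $T$-connectors would need more than $3k$ edges across some cut that has only $3k$. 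Reconciling (i) with (ii) is the main obstacle: a gadget that is richly edge-connected between its terminals ``wants'' to admit every routing, a $T$-connector included, so the edge-connectivity has to be realized along paths deliberately out of phase with what a connector-tree needs. Finally, the closing inequality only balances when the number of ``wasted'' half-edges has the right parity relative to $3k$; this is precisely why the construction works for $k=1$ and for even $k$ while the case analysis degenerates for odd $k\ge 3$. Verifying this parity dichotomy, and checking --- via explicit edge-disjoint paths inside each copy of $F$ --- that $T$ really is $3k$-edge-connected, is the bulk of the remaining work.
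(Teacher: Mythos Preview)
Your high-level framing --- disprove the conjecture via a modular gadget construction, verify $3k$-edge-connectivity locally, and derive a contradiction from how hypothetical connectors must cross the gadget boundaries --- is broadly right, but the specific mechanism you propose for the contradiction does not match what actually works, and the key idea is missing. You aim to show that $k$ edge-disjoint $T$-connectors would be forced to use \emph{more than $3k$} edges across some cut of size $3k$. In the paper's construction this simply does not happen: once the connectors are normalised (Lemma~\ref{l:deg-2k}), their union uses \emph{exactly} $2k$ edges on every relevant (``basic'') $3k$-edge-cut, with $k$ edges to spare. So no single-cut or summed-cut overcount yields a contradiction, and the $\mathbb{F}_2$/$T$-join bookkeeping you sketch has nowhere to bite. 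The ``wasteful detour'' picture and the parity story for odd versus even $k$ are therefore not the right heuristics.

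The ingredient you are missing is a \emph{non-hamiltonian} $3$-connected cubic bipartite graph $N$. The paper places a recursive gadget $F^*_\ell$ at each vertex of one colour class of $N$ (blown up $k$-fold), and the role of the gadget is not to waste boundary edges but to \emph{rigidify} them: an inductive argument (Claims~\ref{cl:cont-disj} and~\ref{cl:cont-disj-l}) shows that for each bundle of $k$ parallel edges coming from a single edge of $N$, the connectors either use all $k$ of them or none. Combined with the exact $2k$-count on basic cuts, this forces the union of the connectors to project to a $2$-regular connected spanning subgraph of $N$ --- a Hamilton cycle --- contradicting the choice of $N$. The even/power-of-two restriction arises because the gadget hierarchy doubles the number of ``output'' edges at each level, not from a half-edge parity count. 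So: keep the modular-gadget philosophy, but replace the overcounting endgame with a projection-to-Hamilton-cycle endgame, and look for gadgets whose job is all-or-nothing rigidity on edge bundles rather than forcing extra crossings.
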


We disprove Conjecture~\ref{conj:west-wu}. Surprisingly, it turns out
that there is a counterexample already for the case $k=1$. More
generally, we provide an infinite family of counterexamples for $k=1$
and for each even $k$. Our main result is the following.
\begin{theorem}\label{t:main}
  For any nonnegative integer $\ell$, there are infinitely many graphs
  with terminals $(G,T)$ such that
  \begin{enumerate}[label=(\arabic*)]
  \item $T$ is $(3\cdot 2^\ell)$-edge-connected in $G$, and
  \item $G$ does not admit $2^\ell$ edge-disjoint $T$-connectors.
  \end{enumerate}
\end{theorem}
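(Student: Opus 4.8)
The plan is to establish Theorem~\ref{t:main} by an explicit construction, treating first the base case $\ell=0$ -- which already disproves Conjecture~\ref{conj:west-wu} -- and then the general case by a uniform ``cabling'' operation. For $\ell=0$ I would assemble $\graph{n}$ from $n$ copies of a small fixed gadget $F$: here $F$ is a graph equipped with a few designated non-terminal \emph{portals} (and possibly a few terminals of its own), and the copies are joined portal-to-portal in a ring-like pattern. Letting $n$ range over an infinite set then yields infinitely many pairwise non-isomorphic graphs with terminals, as required. The gadget $F$ must be designed so that two properties hold at once: \emph{rigidity} -- whenever a family of pairwise edge-disjoint $T$-paths passes through a copy of $F$, the trace it leaves on that copy's portal edges is one of only a bounded number of admissible ``states''; and \emph{tightness} -- every edge-cut of $F$ separating the portals in the way the construction uses has at least $3$ edges. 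Tightness is exactly where West and Wu's value $3k-1$ has to be pushed to $3k$, so $F$ is necessarily denser than their gadget, and reconciling rigidity with tightness is one of the two places where the genuine difficulty lies.

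For $\ell\geq 1$ I would take the \emph{$K$-cable} $\cgraph{n}$ of $\graph{n}$ with $K=2^\ell$, replacing every edge by $K$ parallel copies (equivalently, iterating ``cable by $2$'' a total of $\ell$ times, which is presumably the source of the powers of two). Since cabling multiplies the size of every edge-cut by exactly $K$, tightness upgrades immediately to: every edge-cut of $\cgraph{n}$ separating two terminals has at least $3\cdot 2^\ell$ edges, giving part~(1) of the theorem once one has carried out the finite check inside the constant-size gadget $F$ -- that its portals are not separable by fewer than $3$ edges and that the ring pattern creates no smaller terminal-separating cut.

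The heart of the proof, and the step I expect to be the main obstacle, is part~(2): ruling out $2^\ell$ pairwise edge-disjoint $T$-connectors in $\cgraph{n}$. The lever is the defining feature of a $T$-path: both its endvertices lie in $T$, hence outside any set $S$ of non-terminals, so the path meets $\partial(S)$ in an \emph{even} number of edges. Applying this to the portal cuts of the gadgets, together with edge-disjointness, I would force any $T$-connector to put each copy of $F$ into one of the admissible states of the rigidity property, with the states of consecutive copies constrained by a fixed transition rule coming from how their portals are identified. The contradiction is then a parity (``winding'') obstruction: the transition rules make the state change in a prescribed way across each interface, so after going once around the ring the state of a copy would have to differ from itself. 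For $\ell\geq 1$ the argument runs cable-wise: the $2^\ell$ edge-disjoint $T$-connectors must share the $3\cdot 2^\ell$ portal edges of each cabled gadget among their $2^\ell$ traces, and a counting step shows every trace still consumes at least a third of those edges while carrying a fixed parity, so either the ring-parity obstruction recurs in each of the $2^\ell$ ``lanes'' or the portal edges get overfilled; both are impossible, which gives part~(2).

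The two delicate points, which absorb essentially all of the work, are: (a) constructing a gadget that realises the edge-connectivity value $3$ tightly while still admitting only the rigid portal behaviour, so that West and Wu's $3k-1$ becomes $3k$; and (b) making the cable-wise counting of the previous paragraph sharp enough that $2^\ell$ connectors genuinely cannot coexist -- the obstruction has to be an integral/parity phenomenon built into the ring, since a fractional $T$-connector would survive cabling. By contrast, the edge-connectivity verification (part~(1)) and the ``infinitely many'' clause are routine once $F$ is in hand: the first is a finite computation inside $F$, and the second follows simply by letting the ring length grow.
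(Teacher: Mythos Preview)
Your plan has a genuine gap in the step from $\ell=0$ to $\ell\geq 1$.  Replacing every edge by $2^\ell$ parallel copies does \emph{not} transfer the obstruction ``no single $T$-connector'' to ``no $2^\ell$ edge-disjoint $T$-connectors''.  The rigidity you exploit for $\ell=0$ comes from non-terminals having degree exactly $3$: a $T$-connector has even degree at every non-terminal, hence degree $0$ or $2$, and this is what forces the trace on the portal edges into a small set of states.  After cabling by $K=2^\ell$, every non-terminal has degree $3K$, and a single $T$-connector may now have degree $0,2,4,\dots$ there; two $T$-paths of the same connector can pass through the same non-terminal, using both parallel copies of one portal edge and one copy each of two others.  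Your ``counting step'' (``every trace still consumes at least a third of those edges while carrying a fixed parity'') is exactly where this bites: there is no reason the $K$ connectors should split the $3K$ portal edges as $3$ edges per connector, nor that each connector's trace should look like a trace in the uncabled gadget.  You note yourself that a fractional relaxation would survive cabling, but you have not supplied an integral obstruction that does.

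The paper's construction for $\ell\geq 1$ is accordingly much more elaborate than cabling and is designed precisely to keep \emph{all non-terminals of degree $3$}.  Each non-terminal $y$ of the base bipartite graph is replaced by $k$ independent degree-$3$ non-terminals (not one degree-$3k$ vertex), and at each terminal a recursively built gadget $F^*_\ell$ is inserted; these gadgets $F^*_i$ are constructed by $\ell$ levels of nesting so that a delicate all-or-nothing property (Claims~\ref{cl:cont-disj} and~\ref{cl:cont-disj-l}) forces the union $Q=Q_1\cup\dots\cup Q_k$ to use either all $k$ or none of the edges corresponding to each edge of $N$.  Even this is not enough on its own: one first needs Lemma~\ref{l:deg-2k}, which embeds many copies of $G_\ell$ into a larger host and uses a global averaging argument to find a copy in which $Q$ meets every ``basic'' $3k$-edge-cut in exactly $2k$ edges.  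None of this structure is produced by cabling.  For $\ell=0$ your ring-with-winding idea is plausible in outline, but the paper's route is different and concrete: it takes a $3$-connected non-hamiltonian cubic bipartite graph $N$, declares one colour class terminals, and inserts a $K_{3,3}$ at each terminal; a $T$-connector then contracts to a Hamilton cycle of $N$, a contradiction.
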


The plan of the paper is as follows. In
Section~\ref{sec:preliminaries}, we recall some necessary notions and
prove a useful technical lemma on gluing graphs with terminals
(Lemma~\ref{l:conn}). Section~\ref{sec:construction} describes the
construction of the counterexamples $(G_\ell,T_\ell)$ for the case
where $k=2^\ell$ is a power of two. The same assumption that $k$ is a
power of two is used in Sections~\ref{sec:choosing} and
\ref{sec:proof}. In Section~\ref{sec:choosing}, we show that if
$G_\ell$ contains $k$ edge-disjoint $T_\ell$-connectors, then these
$T_\ell$-connectors can be chosen so as to satisfy certain additional
properties in relation to the edge-cuts in $G_\ell$. In
Section~\ref{sec:proof}, we prove the main result. Finally, we discuss
in Section~\ref{sec:even} how the construction is adapted to the case
where $k$ is an even integer which is not a power of two.

%%%%%%%%%%%%%%%%%%%%%%%%%%%%%%%%%%%%%%%%%%%%%%%%%%%%%%%%%%%%%%%%%%%%%%

\section{Preliminaries}
\label{sec:preliminaries}

If $n$ is a positive integer, we let $[n]$ denote the set
$\Setx{1,\dots,n}$. Throughout this paper, parallel edges are allowed
in graphs. A path $P$ in a graph $G$ is said to be an \emph{$uv$-path}
if its endvertices are $u$ and $v$.

If $G$ is a graph and $X\subseteq V(G)$, then the symbol $\bdx G X$
denotes the set of all edges of $G$ with exactly one endvertex in
$X$. The \emph{degree} of $X$ in $G$, denoted by $d_G(X)$, is defined
as $\size{\bdx G X}$. We abbreviate $d_G(\Setx v)$ to $d_G(v)$.

In order to break down the construction into steps, we define an
insertion operation as follows. Let $(G_1,T_1)$ and $(G_2,T_2)$ be
disjoint graphs with terminals, let $t_1\in T_1$ and $t_2\in T_2$ be
terminals of equal degree, and let
$h:\,\bdx{G_1}{t_1}\to\bdx{G_2}{t_2}$ be a bijection. The
\emph{insertion} of $G_2$ into $G_1$ via $h$ is the graph with
terminals $G_1\circ_h G_2$ obtained as follows:
\begin{itemize}
\item we start with the disjoint union of $G_1-t_1$ and $G_2-t_2$,
\item for each $e\in\bdx{G_1}{t_1}$, we add an edge joining the
  endvertex of $e$ distinct from $t_1$ to the endvertex of $h(e)$
  distinct from $t_2$,
\item the set of terminals in the resulting graph is
  $T_1\cup T_2 \setminus \Setx{t_1,t_2}$.
\end{itemize}
To keep things simple, we let the vertices $t_1$ and $t_2$ be implicit
in the symbol $h$ in $G_1\circ_h G_2$, and we do not include the
sets $T_1,T_2$ in the notation.

The following lemma helps us to control the connectivity of terminals
in graphs obtained by insertion.
\begin{lemma}
  \label{l:conn}
  Let $(G_1,T_1)$ and $(G_2,T_2)$ be disjoint graphs with
  terminals. For each $i$ in $\Setx{1,2}$, suppose that
  $\size{T_i} \geq 2$ and $T_i$ is $d$-edge-connected in $G_i$, and
  let $t_i\in T_i$ be a terminal of degree $d$. For a bijection
  $h:\,\bdx{G_1}{t_1}\to\bdx{G_2}{t_2}$, let $(G,T)$ denote the graph
  with terminals $G_1\circ_h G_2$. Then $T$ is $d$-edge-connected in
  $G$.
\end{lemma}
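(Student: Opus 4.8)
The plan is to verify directly that $G$ has no edge-cut of size smaller than $d$ separating two terminals of $T=(T_1\setminus\{t_1\})\cup(T_2\setminus\{t_2\})$. Write $V_i=V(G_i)\setminus\{t_i\}$ for $i\in\{1,2\}$, so that $V(G)=V_1\cup V_2$ is a partition, and let $B$ denote the set of the $d$ ``bridge'' edges added by the insertion: each $b\in B$ arises from some $e\in\bdx{G_1}{t_1}$ and joins the non-$t_1$ endvertex of $e$ (which lies in $V_1$) to the non-$t_2$ endvertex of $h(e)$ (which lies in $V_2$). Fix a set $X\subseteq V(G)$ separating two terminals of $T$, put $X_i=X\cap V_i$, and note $d_G(V_1)=d_G(V_2)=d$ since $B$ is exactly the set of edges of $G$ between $V_1$ and $V_2$.

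First I would record two ``projection'' identities. For $Y\subseteq V_1$, the map sending each edge of $G_1-t_1$ to itself and each bridge edge arising from $e\in\bdx{G_1}{t_1}$ to $e$ is a bijection between $\bdx G Y$ and $\bdx{G_1}Y$, so $d_G(Y)=d_{G_1}(Y)$; taking complements (in $G$ and in $G_1$) then gives $d_G(Y\cup V_2)=d_{G_1}(Y\cup\{t_1\})$. The symmetric identities $d_G(Z)=d_{G_2}(Z)$ and $d_G(Z\cup V_1)=d_{G_2}(Z\cup\{t_2\})$ hold for $Z\subseteq V_2$. Applying submodularity of the cut function to the pairs $(X,V_1)$ and $(X,V_2)$ now yields
\begin{align*}
 d_G(X)+d&\ge d_{G_1}(X_1)+d_{G_2}(X_2\cup\{t_2\}),\\
 d_G(X)+d&\ge d_{G_1}(X_1\cup\{t_1\})+d_{G_2}(X_2).
\end{align*}
(Submodularity can be avoided: expanding $d_G(X)$ as the number of edges of $G_1-t_1$ crossing $X_1$, plus the number of edges of $G_2-t_2$ crossing $X_2$, plus the number of bridge edges crossing $X$, and classifying the bridge edges by which sides of the partition their endvertices fall on, one obtains the same two inequalities, with explicit nonnegative slack.)

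Next I would use the hypothesis that $T_i$ is $d$-edge-connected in $G_i$. Since $t_1\notin X_1$ while $t_1\in T_1$, the set $X_1$ separates two terminals of $T_1$ in $G_1$ precisely when $X_1$ contains a terminal of $T_1\setminus\{t_1\}$, and then $d_{G_1}(X_1)\ge d$; likewise $d_{G_1}(X_1\cup\{t_1\})\ge d$ whenever some terminal of $T_1\setminus\{t_1\}$ lies outside $X_1$, and symmetrically on the $G_2$ side. Let $a$ and $b$ be the truth values of ``$X_1$ meets $T_1\setminus\{t_1\}$'' and ``$T_1\setminus\{t_1\}\not\subseteq X_1$'', and let $c,g$ be the analogous statements for $X_2$ and $T_2\setminus\{t_2\}$. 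Because $X$ separates two terminals of $T$ we have $a\vee c$ and $b\vee g$, and because $\size{T_i}\ge 2$ (so $T_i\setminus\{t_i\}\ne\emptyset$) we have $a\vee b$ and $c\vee g$; a short Boolean argument shows that these four disjunctions force $(a\wedge g)\vee(b\wedge c)$. In the first case the first displayed inequality gives $d_G(X)\ge d$, and in the second case the second one does, so $d_G(X)\ge d$ in either case, as required.

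The one point that needs care is that neither displayed inequality suffices by itself --- a cut whose $G_1$-side already separates $T_1$ may have a completely unhelpful $G_2$-side --- so both projections must be combined, and the key observation making this work is precisely that $\size{T_i}\ge 2$ forces, on each side, at least one of the two candidate sets to separate $T_i$; everything else is routine bookkeeping.
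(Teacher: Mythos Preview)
Your proof is correct, but it follows a different route from the paper's. The paper argues via Menger's theorem: given two terminals $v_1,v_2\in T$, it explicitly constructs $d$ edge-disjoint $v_1v_2$-paths in $G$, treating separately the case where $v_1,v_2$ lie on opposite sides (glue $v_1t_1$-paths to $v_2t_2$-paths through the interface) and the case where both lie in $T_1$ (reroute the at most $d/2$ paths that pass through $t_1$ by detouring through $G_2$, which requires a small argument to extract an actual path from the union of two edge-disjoint $w_2t_2$-paths).

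Your argument works on the dual side, bounding $d_G(X)$ for every separating set $X$ via submodularity applied to the pair $(X,V_i)$ together with the projection identities $d_G(Y)=d_{G_1}(Y)$ and $d_G(Y\cup V_2)=d_{G_1}(Y\cup\{t_1\})$. The Boolean reduction to $(a\wedge g)\vee(b\wedge c)$ is correct and is precisely where the hypothesis $\size{T_i}\geq 2$ enters. What your approach buys is that it sidesteps the path-gluing technicalities (in particular the degree-$4$ issue in the paper's second case); what the paper's approach buys is an explicit construction of the paths, which is slightly more informative and requires no familiarity with submodularity. Both are short and self-contained.
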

\begin{proof}
  For $v_1,v_2\in T$, we want to find $d$ edge-disjoint $v_1v_2$-paths
  in $G$. By symmetry, we may assume that $v_1\in T_1$. We discuss two
  cases.

  Suppose first that $v_2\in T_2$. Since $T_1$ is $d$-edge-connected
  in $G_1$, we can choose edge-disjoint $v_1t_1$-paths $P_1,\dots,P_d$
  in $G_1$. Similarly, we choose edge-disjoint $v_2t_2$-paths $Q_1,\dots,Q_d$ in
  $G_2$. For $i\in [d]$, let $e_i$ be the edge of $P_i$ incident with
  $t_1$ and consider the unique path $Q_j$ containing the edge
  $h(e_i)$. Glue $P_i$ and $Q_j$ by removing $t_1$ and $t_2$ and
  adding an edge of $G$ that joins the new endvertices of these
  shortened paths (that is, an edge of $G$ connecting the endvertices
  of $e_i$ and $h(e_i)$ other than $t_1$ and $t_2$). Note that this
  gives $d$ edge-disjoint $v_1v_2$-paths in $G$.

  For the second case, suppose that $v_2\in T_1$. Let $R_1,\dots,R_d$
  be edge-disjoint $v_1v_2$-paths in $G_1$. If none of them contains
  the vertex $t_1$, then $R_1,\dots,R_d$ are the sought paths in
  $G$. Thus, assume that some of the paths, say $R_1,\dots,R_c$,
  contain $t_1$. Note that $c\leq\frac d2$ since $t_1$ is an internal
  vertex for each of these edge-disjoint paths and $t_1$ has degree
  $d$ in $G_1$. Since $\size{T_2} \geq 2$, we may choose
  $w_2\in T_2\setminus\Setx{t_2}$ and choose a set $\SSS$ of $d$
  edge-disjoint $w_2t_2$-paths in $G_2$. For each $i\in [c]$, we
  combine $R_i$ with two paths from $\SSS$ and obtain a $v_1v_2$-path
  $R'_i$ in $G$ as follows.

  Let $i\in [c]$ and $e_{i,1},e_{i,2}$ be the two edges of $R_i$
  incident with $t_1$. For $j\in\Setx{1,2}$, let $S_{i,j}$ be the
  unique $w_2t_2$-path containing $h(e_{i,j})$ and let $s_{i,j}$ be
  the neighbour of $t_2$ on $S_{i,j}$. Although $S_{i,1}$ and
  $S_{i,2}$ are edge-disjoint, $S_{i,1}\cup S_{i,2}-t_2$ need not be a
  path in $G_2$ because of vertices of degree $4$. However, it is easy
  to see that $S_{i,1}\cup S_{i,2}$ contains an $s_{i,1}s_{i,2}$-path
  $S'_i$. Take the two parts of $R_i-t_1$ and the path $S'_i$, and glue
  them together by adding the edges that join the endvertices of $S'_i$
  and the corresponding vertices of $R_i-t_1$ to obtain a
  $v_1v_2$-path $R'_i$. The paths $R'_1,\dots,R'_c,R_{c+1},\dots,R_d$
  are edge-disjoint $v_1v_2$-paths in $G$ as required.
\end{proof}

%%%%%%%%%%%%%%%%%%%%%%%%%%%%%%%%%%%%%%%%%%%%%%%%%%%%%%%%%%%%%%%%%%%%%%

\section{The construction for $k$ a power of two}
\label{sec:construction}

Let $\ell\geq 0$ and set $k = 2^\ell$. We construct a graph $G_\ell$
and a set of terminals $T_\ell \subseteq V(G_\ell)$ satisfying the
properties in Theorem~\ref{t:main}. The construction is in several
steps. In each of the graphs with terminals we construct, all
terminals have degree $3k$ and all non-terminals have degree
$3$. Moreover, one of the terminals is usually designated as the
\emph{root} terminal $r$ and the edges incident with $r$ are
partitioned into three \emph{classes} denoted by $\AAA$, $\BB$,
$\CC$.

Let us first describe the construction for $\ell\geq 1$ and then
comment on the (much simpler) case $\ell = 0$.

\textbf{Step 1.} In steps 1--2, we describe the graphs with terminals
$(F_i,T_i)$, where $0\leq i \leq \ell-1$. The graph $F_0$ consists of
two terminals joined by $3k$ parallel edges. One of the terminals is
the root, and the partition of the edges into $A$, $B$, $C$ is
arbitrary, with $C$ containing a single edge (the \emph{output edge})
and the sizes of $A$ and $B$ being $\frac32k$ and $\frac32k-1$,
respectively.

\textbf{Step 2.} For $i > 0$, $(F_i,T_i)$ is shown in
Figure~\ref{fig:fi}. The partition of the edges incident with the root
is shown by edge labels $A$, $B$, $C$. The sizes of these classes are
$\frac32k$, $\frac32k-2^i$ and $2^i$, respectively. (Note that this is
consistent with the case $i=0$.) The edges of type $C$ are the
\emph{output edges} of $F_i$.

\textbf{Step 3.} The graph with terminals $(F^*_i,T^*_i)$
($0\leq i \leq \ell-1$) is constructed as follows. First,
$(F^*_0,T^*_0) = (F_0,T_0)$. Assume then that $i > 0$ and that
$(F^*_{i-1},T^*_{i-1})$ has already been constructed.

The graph with terminals $(F^*_i,T^*_i)$ is obtained by inserting one
copy of $(F^*_{i-1},T^*_{i-1})$ at the terminal $f_1$ of $F_i$, and
another copy at the terminal $f_2$. In the graph $F^*_{i-1}$, the
insertion involves the root terminal $r$. More precisely, let $h_1$ be
a bijection between $\bdx{F_i}{f_1}$ and $\bdx{F^*_{i-1}}r$ which maps
the edges between $f_1$ and $u_1^A$ to class $A$ edges, the edges
between $f_1$ and $u_1^B$ to class $B$ edges, and the remaining edges
incident with $f_1$ to class $C$ edges. (Note that the edge counts
match: $\frac32k$, $\frac32k-2^{i-1}$ and $2^{i-1}$, respectively.)
Let $h_2$ be an analogous bijection for $f_2$ (and $r$) in the place
of $f_1$ (and $r$), with $u_2^A$ playing the role of $u_1^A$ etc.

We define $(F^*_i,T^*_i)$ as the result of inserting $F^*_{i-1}$ into
$F_i$ at $f_1$ via $h_1$, followed by the insertion of $F^*_{i-1}$ at
$f_2$ via $h_2$. The root terminal of $F^*_i$ is defined as identical
with that of $F_i$, and so is its partition of the incident edges into
the classes $A, B, C$ and its set of \emph{output edges}.

\textbf{Step 4.} As the next step, we use the graph with terminals
$(F_\ell,T_\ell)$ shown in Figure~\ref{fig:fl} to construct
$(F^*_\ell,T^*_\ell)$. This involves six insertions of $F^*_{\ell-1}$
into $F_\ell$ at terminals from the set
\begin{equation*}
  \Setx{f_{12},f_{21},f_{23},f_{32},f_{31},f_{13}}.
\end{equation*}
For the insertion, say, at $f_{12}$, we choose any bijection mapping
the $\frac32k$ edges between $f_{12}$ and $w_1$ to class $A$ edges of
$F^*_{\ell-1}$, the $\frac k2$ edges between $f_{12}$ and the root of
$F_\ell$ to the class $C$ edges, and the remaining $k$ edges to class
$B$ edges. (For the other insertions, the choice of bijection is
similar, the only difference being that $w_1$ may be replaced by $w_2$
or $w_3$.) Note that $\frac k2 = 2^{\ell-1}$, so the bijection exists
indeed. The resulting graph with terminals is
$(F^*_\ell,T^*_\ell)$. Its root terminal is defined to be the same as
in $F_\ell$.

\textbf{Step 5.} In the last two steps, we construct the graph with
terminals $(G_\ell,T_\ell)$. Let $N$ be a $3$-connected
non-hamiltonian cubic bipartite graph~\cite{EH} (see also \cite{BGM}),
with colour classes $X$ and $Y$. Perform the following steps:
\begin{itemize}
\item construct a bipartite graph $N'$ by replacing each vertex
  $y\in Y$ with an independent set $y_1,\dots,y_k$ of non-terminals,
  each adjacent to the neighbours of $y$, and declaring the vertices
  in $X$ (now of degree $3k$) terminals,
\item insert one copy of $F^*_\ell$ at each vertex $x\in X$ in $N'$
  via a bijection $h$ between the set of $3k$ edges incident with $x$
  and the set of $3k$ edges incident with the root terminal of
  $F^*_\ell$ (which is the vertex of $F^*_\ell$ used for each
  insertion); $h$ is chosen such that for each neighbour $y$ of $x$ in
  $N$, all the $k$ edges of $N'$ corresponding to the edge $xy$ are
  mapped to the $k$ edges incident with $r$ and a vertex from
  $\{f_{pq},f_{qp}\}$, for suitable $1\leq p < q\leq 3$.
\end{itemize}

\textbf{Step 6.} Finally, insert a copy of $K_{3,3k}$ (with $3$
terminals of degree $3k$, one of which is chosen as the root) at every
terminal $t$ of the resulting graph. For the insertion, use an
arbitrary bijection between the respective edge sets. Each insertion
creates one induced copy of $K_{2,3k}$ (containing two terminals); the
vertex sets of these copies will be called \emph{atoms} of
$G_\ell$. We define a set $Y \subseteq V(G_\ell)$ to be \emph{aligned}
if every atom of $G_\ell$ is either contained in $Y$ or disjoint from
$Y$.

\begin{figure}
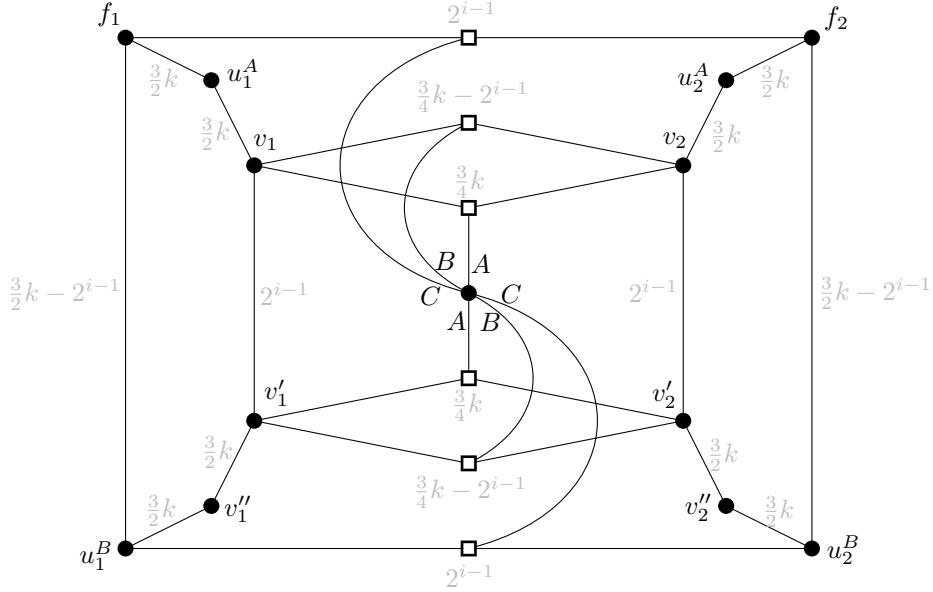

  \begin{center}
    \fig1
  \end{center}
  \caption{The graph with terminals $(F_i,T_i)$, where
    $1\leq i\leq \ell-1$. Terminals are shown by black dots. A white
    square with a grey numerical label represents an independent set
    of non-terminals of the given size, with each non-terminal joined
    by edges to all the terminals adjacent to the square mark. Labels
    on edges between terminals represent parallel edges of the given
    multiplicity. The root is the middle vertex. Labels on the
    adjacent edges represent the partition into the classes $A$, $B$,
    $C$.}
  \label{fig:fi}
\end{figure}

\begin{figure}
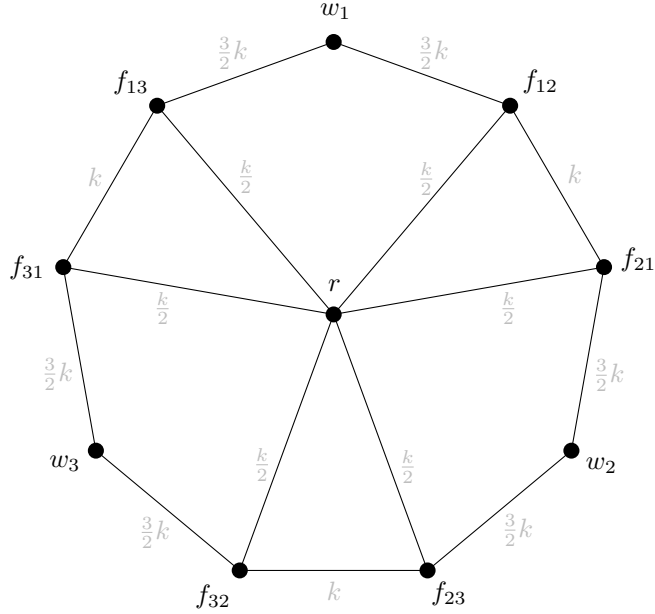

  \begin{center}
    \fig2
  \end{center}
  \caption{The graph with terminals $(F_\ell,T_\ell)$ when
    $\ell\geq 1$. The same conventions as in Figure~\ref{fig:fi}
    apply. The root is the middle vertex.}
  \label{fig:fl}
\end{figure}

As remarked before the construction, all terminals in the above graphs
have degree $3k$ and all non-terminals have degree $3$. Moreover, we
have the following consequence of Lemma~\ref{l:conn}.

\begin{observation}\label{obs:t-l-conn}
  The set $T_\ell$ is $3k$-edge-connected in $G_\ell$.
\end{observation}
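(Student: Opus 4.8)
The plan is to derive Observation~\ref{obs:t-l-conn} by a direct induction that tracks the assembly of $(G_\ell,T_\ell)$ through Steps~1--6 and applies Lemma~\ref{l:conn} at every insertion. The key observation is that each insertion in the construction is performed at a terminal of degree $3k$, and every graph with terminals that occurs has at least two terminals, so the hypotheses of Lemma~\ref{l:conn} are met throughout. It therefore suffices to check that each ``building block'' fed into an insertion already has a $3k$-edge-connected set of terminals, and then let the lemma propagate the property.

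First I would handle the base pieces. For $F_0$ (two terminals joined by $3k$ parallel edges) the claim is immediate. For the gadgets $(F_i,T_i)$ with $1\le i\le\ell-1$ and for $(F_\ell,T_\ell)$ one checks directly from Figures~\ref{fig:fi} and~\ref{fig:fl} that any two terminals are joined by $3k$ edge-disjoint paths: a white-square mark standing for an independent set of size $m$ contributes $m$ internally disjoint length-two paths between the two terminals it joins, a parallel-edge bundle contributes its multiplicity, and these numbers add up to $3k$ for each pair of terminals; this is routine. For $K_{3,3k}$ (Step~6), the $3k$ vertices on the large side are common neighbours of all three terminals, so between any two of them there are $3k$ edge-disjoint paths. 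For $N'$ (Step~5) I would argue by uncrossing: given a cut $\bdx{N'}{S}$ separating two terminals $x,x'\in X$, one may replace $S$ by moving, for each $y\in Y$, the whole independent set $\Setx{y_1,\dots,y_k}$ to the side of the cut to which it sends fewer edges; since all $y_i$ have the same $3$-element neighbourhood $N_N(y)$, this does not increase $\size{\bdx{N'}{S}}$ and keeps $x,x'$ separated. The resulting set projects to a vertex set $S_N\subseteq V(N)$ with $\size{\bdx{N'}{S}}=k\cdot d_N(S_N)\ge 3k$, because $N$ is $3$-connected and hence $3$-edge-connected. Thus $T=X$ is $3k$-edge-connected in $N'$.

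With the base pieces established, the induction is immediate. In Step~3, $(F^*_i,T^*_i)$ arises from $(F_i,T_i)$ by two successive insertions of $(F^*_{i-1},T^*_{i-1})$ at the degree-$3k$ terminals $f_1$ and $f_2$; since $(F^*_0,T^*_0)=(F_0,T_0)$ is $3k$-edge-connected, Lemma~\ref{l:conn} gives $3k$-edge-connectedness of $T^*_i$ by induction on $i$. Step~4 applies the lemma six more times, inserting $(F^*_{\ell-1},T^*_{\ell-1})$ into $(F_\ell,T_\ell)$; Step~5 inserts a copy of $(F^*_\ell,T^*_\ell)$ at each terminal of $N'$; and Step~6 inserts a copy of $K_{3,3k}$ at each remaining terminal. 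At every one of these insertions both graphs with terminals are $3k$-edge-connected, the terminal used has degree $3k$, and both pieces have at least two terminals, so Lemma~\ref{l:conn} applies and preserves $3k$-edge-connectedness. Running through all the insertions yields that $T_\ell$ is $3k$-edge-connected in $G_\ell$; the case $\ell=0$ (where $k=1$ and the gadgets $F_i$ with $i\ge1$ do not occur) is the same argument with fewer steps. The only parts that are not a mechanical invocation of Lemma~\ref{l:conn} are the uncrossing argument for $N'$ and the bookkeeping that every insertion meets the lemma's hypotheses (degree $3k$, at least two terminals on each side); I expect these to be the only, and rather minor, crux.
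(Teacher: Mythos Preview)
Your proposal is correct and follows exactly the paper's approach: verify that each building block $(F_i,T_i)$, $N'$, and $K_{3,3k}$ has a $3k$-edge-connected terminal set, and then iterate Lemma~\ref{l:conn} through every insertion in Steps~3--6. The paper's proof is just a terser version of what you wrote (it says ``by direct inspection'' and ``since $N$ is $3$-connected'' where you supply the explicit path-counting and uncrossing arguments), so there is nothing to add.
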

\begin{proof}
  The set $T_i$ is $3k$-edge-connected in $F_i$ by direct inspection,
  for $0\leq i\leq\ell$. Similarly, the set of terminals is
  $3k$-edge-connected in the graph $N'$ created in step 5 of the
  construction (since $N$ is $3$-connected) and the same applies to
  $K_{3,3k}$ used in step 6. Thus, the observation follows from
  iterated use of Lemma~\ref{l:conn}.
\end{proof}

As promised, we now turn to the case $\ell=0$ (corresponding to
$k=1$). In this case, steps 1--4 may be skipped completely and we can
start with the $3$-connected non-hamiltonian cubic bipartite graph $N$
of step 5. We do not need to perform any insertions in this step, just
regard the colour class $X$ as terminals and the colour class $Y$ as
non-terminals. Step 6 is performed as described in the construction,
i.e., a copy of $K_{3,3}$ is inserted at each terminal.

%%%%%%%%%%%%%%%%%%%%%%%%%%%%%%%%%%%%%%%%%%%%%%%%%%%%%%%%%%%%%%%%%%%%%%

\section{Choosing the connectors}
\label{sec:choosing}

Recall that $k=2^\ell$ and consider the graph with terminals
$(G_\ell,T_\ell)$ constructed in
Section~\ref{sec:construction}. Recall that all terminals of $G_\ell$
have degree $3k$ and all non-terminals have degree $3$. As shown in
Observation~\ref{obs:t-l-conn}, $T_\ell$ is $3k$-edge-connected in
$G_\ell$.

An edge-cut $R$ in $G_\ell$ is said to be \emph{basic} if there is an
independent set $X_0 \subseteq V(G_\ell)$ consisting of non-terminals such
that the following holds for the partition $\PP$ of
$V(G_\ell)\setminus X_0$ into vertex sets of connected components of
$G_\ell-X_0$:
\begin{itemize}
\item for each $X\in\PP$, $d_{G_\ell}(X) = 3k$.
\item $R = \bdx{G_\ell}Y$ for some $Y\in \PP$,
\item each set in $\PP$ is aligned and contains at least one
  terminal,
\end{itemize}
In this situation, we say that $R$ is a basic edge-cut \emph{for
  $X_0$}.

\begin{observation}\label{obs:basic}
  If $X$ is an atom in $G_\ell$, then the edge-cut $\bdx{G_\ell}X$ is
  basic.
\end{observation}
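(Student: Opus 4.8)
The plan is to pass from $G_\ell$ to the graph $G'$ obtained at the end of step 5 of the construction, use $G'$ to reduce the statement to an assertion about vertex sets, and then build the witnessing independent set $X_0$ by peeling along the layers of the construction. Recall first that, by construction, the atom $X$ is exactly the copy of $K_{3,3k}$ with its root deleted that was inserted, in step 6, at some terminal $t$ of $G'$; since that insertion replaced $t$ by a connected gadget to which the $3k$ edges formerly at $t$ were re‑attached, contracting every atom of $G_\ell$ back to a single vertex recovers $G'$. Hence $\bdx{G_\ell}{X}$ is precisely the set of $3k$ edges incident with $t$ in $G'$, so $d_{G_\ell}(X)=3k$ (the first item in the definition of a basic edge-cut). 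Now any set $X_0$ of non-terminals of $G'$ automatically avoids every atom, so the components of $G_\ell-X_0$ are obtained from the components of $G'-X_0$ by re‑expanding each terminal into its atom; these components have the same boundary degrees in $G_\ell$ as the corresponding components have in $G'$, each is automatically aligned, each contains a terminal of $G_\ell$ exactly when the corresponding component of $G'-X_0$ contains a terminal of $G'$, and $X$ is one of these components exactly when $t$ is isolated in $G'-X_0$, i.e.\ when $N_{G'}(t)\subseteq X_0$. (Here one also uses the fact, which is read off the construction, that $G'$ has no edge joining two terminals: each class-$C$/output edge of an $F_i$ is routed, through the successive gluings, to the special terminal of a copy of $F_0$, and once the root of $F_\ell$ is consumed in step 5 that terminal ends up adjacent only to non-terminals coming from $N'$.) Thus it suffices to produce an independent set $X_0$ of non-terminals of $G'$ with $N_{G'}(t)\subseteq X_0$ and such that every component of $G'-X_0$ has boundary degree $3k$ in $G'$ and contains a terminal.

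The set $X_0$ is then constructed from the layered structure of $G'$, which is assembled by the insertion operation from $N'$ and copies of $F_0,\dots,F_\ell$. In $N'$ the non-terminals form an independent set; in each $F_i$ the non-terminals come in independent classes, each joined to a fixed triple of terminals; and the only non-terminal/non-terminal adjacencies created by the gluings join a non-terminal class of some $F_i$ to a non-terminal class of $F_{i-1}$. I would take $X_0$ to be the union of $N_{G'}(t)$ with the vertex sets of a carefully chosen collection of these classes — roughly, exactly those classes that become separated from the bulk of $G'$ once $N_{G'}(t)$ is deleted, and, for each adjacent pair of classes of consecutive levels, only the one lying on the side of $t$. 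One then checks that $X_0$ consists of non-terminals and is independent (no two selected classes are adjacent, and $N_{G'}(t)$ itself is independent, since the $3k$ neighbours of $t$ lie in classes of a single copy of $F_1$ together with one vertex of $N'$), and that every component of $G'-X_0$ is either the isolated vertex $t$ (boundary $3k$, a terminal) or one of the gluing‑blocks — $N'$, or a copy of some $F_i$ — with certain non-terminal classes removed, whose boundary is one of the $3k$-edge-cuts created by the gluings and which contains a terminal. This last verification is exactly the kind of bookkeeping already carried out for Observation~\ref{obs:t-l-conn}; it goes through block by block with repeated use of the $3k$-edge-connectivity of the terminals. (In the degenerate case $\ell=0$ one simply takes $X_0$ to be the set of all non-terminals of $G'$, which is independent because $N$ is bipartite.)

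The main obstacle is the degree bookkeeping in the second step. Deleting only $N_{G'}(t)$ isolates $t$, but it lets roughly $2\cdot 3k$ edges leak into the remainder of $G'$, so $X_0$ must absorb enough additional non-terminals to split what is left into pieces of boundary degree \emph{exactly} $3k$. Arranging this while keeping $X_0$ simultaneously independent, non-terminal, and disjoint from every atom is the only delicate point of the proof, and it is precisely here that the detailed adjacency structure of the gadgets $F_i$ and $F_\ell$ from Figures~\ref{fig:fi}--\ref{fig:fl} is genuinely used; everything else is routine.
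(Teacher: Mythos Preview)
Your reduction to $G'$ is natural, but the key structural claim it rests on --- that $G'$ has no edge joining two terminals --- is false. In each gadget $F_i$ (for $1\le i\le \ell-1$) the terminal $v''_1$ has exactly two neighbours, namely the terminals $v'_1$ and $u^B_1$; this is stated explicitly in the proof of Claim~\ref{cl:cont-disj} (see the remark after~\eqref{eq:2} and the derivation of~\eqref{eq:3}). None of $v''_1,v'_1,u^B_1$ is a root or an $f_j$, so none of them is consumed by any insertion in steps~3--5. Hence in $G'$ the terminal $v''_1$ is still adjacent only to the terminals $v'_1$ and $u^B_1$. For the atom $X$ sitting at $t=v''_1$ your plan requires $N_{G'}(t)\subseteq X_0$ with $X_0$ a set of non-terminals, which is impossible. (Your parenthetical justification traces only the fate of class-$C$/output edges through the gluings; the many direct parallel edges between terminals such as $v'_1v''_1$ and $u^B_1v''_1$ are untouched by those gluings.) The degenerate case $\ell=0$ you mention at the end does work, but the general argument collapses.

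The paper sidesteps all of this by choosing $X_0$ \emph{inside} the atom rather than outside it: take $X_0$ to be the $3k$ non-terminals of $X$. Then $G_\ell-X_0$ has exactly three components --- the two terminals of $X$ as singletons and the connected remainder $G_\ell-X$ --- and one checks the conditions of a basic edge-cut directly, with $\bdx{G_\ell}X$ realised as the boundary of the third component. No knowledge of the internal structure of $G'$ or of the gadgets $F_i$ is needed. You restricted yourself unnecessarily to $X_0$ disjoint from every atom, presumably to force alignment of all parts of $\mathcal P$ for free; dropping that restriction and looking inside $X$ gives a two-line proof.
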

\begin{proof}
  Let $X_0$ be the set of non-terminals in $X$. Observe that
  $G_\ell-X_0$ has precisely three components: two of them are
  singletons containing the terminals in $X$, and the third one is
  $G_\ell-X$. (The connectedness of $G_\ell-X$ follows from
  the fact that the graph entering step 6 of the construction of
  $G_\ell$ is 2-connected.) It is routine to check the conditions in the
  definition of a basic edge-cut.
\end{proof}

\begin{lemma}
  \label{l:deg-2k}
  Let $\ell\geq 0$ and let $k = 2^\ell$. Consider the graph with terminals
  $(G_\ell,T_\ell)$ and let $r\in T_\ell$. If
  Conjecture~\ref{conj:west-wu} holds for the given value of $k$, then
  there exist edge-disjoint $T_\ell$-connectors $Q_1,\dots,Q_k$ in
  $G_\ell$ with the following properties:
  \begin{enumerate}[label=(\roman*)]
  \item $Q := Q_1\cup\dots\cup Q_k$ contains exactly $2k$ edges in
    each basic edge-cut of $G_\ell$,
  \item $Q$ contains at least $2k$ edges of $\bdx{G_\ell}A$, where $A$ is
    any aligned set of vertices of $G_\ell-r$,
  \item for each $i\in [k]$, $Q_i-r$ is connected.
  \end{enumerate}
\end{lemma}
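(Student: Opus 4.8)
The plan is to start from an arbitrary collection of $k$ edge-disjoint $T_\ell$-connectors (which exists by Conjecture~\ref{conj:west-wu}, since $T_\ell$ is $3k$-edge-connected in $G_\ell$ by Observation~\ref{obs:t-l-conn}), and to massage it into one satisfying (i)--(iii) without destroying edge-disjointness. The first observation is a parity/counting lower bound: if $R=\bdx{G_\ell}Y$ is a basic edge-cut for some independent set $X_0$ and $Y\in\PP$, then $Y$ is aligned, contains a terminal, and $d_{G_\ell}(Y)=3k$; for the union $Q=Q_1\cup\dots\cup Q_k$ to connect the terminals inside $Y$ to those outside, $Q$ must use at least a certain number of edges of $R$. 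I would argue that each $T_\ell$-connector $Q_j$ individually must cross $R$ an even number of times (because shortcutting its $T$-paths and looking at the induced connected graph on $T$ forces at least one crossing, and $T$-paths that leave $Y$ must come back), so $Q_j$ crosses $R$ at least twice, giving $\size{Q\cap R}\ge 2k$. Summing over all basic edge-cuts simultaneously and comparing with $\size{R}=3k$ will pin down that in fact $\size{Q\cap R}=2k$ for every basic cut once we make $Q$ "efficient", because an excess in one cut forces deficiencies elsewhere — the edge budget is tight. This is essentially the mechanism by which (i) will be forced.

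Next I would address (iii), the requirement that each $Q_i-r$ be connected. Here the point is local surgery at the root $r$: the connector $Q_i$ is a union of edge-disjoint $T$-paths, and $r$ has degree $3k$. If $Q_i-r$ is disconnected, then $r$ is used by $Q_i$ (as an endpoint or internal vertex of several $T$-paths) in a way that can be rerouted. I would use the structure established in the proof of Lemma~\ref{l:conn} — in particular the trick of combining a path through a degree-$3k$ terminal with pairs of paths from a large bundle of internally disjoint paths — to reroute the portions of $Q_i$ that pass through $r$ so that $Q_i-r$ becomes connected, while keeping $Q_i$ a valid $T_\ell$-connector and preserving edge-disjointness from the other $Q_j$'s. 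Since $r$ itself is a terminal and the gadgets $F^*_\ell$, $K_{3,3k}$ attached around it are highly connected, there is enough room to absorb the rerouting inside the atoms and gadgets near $r$.

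For (ii), the claim is that $Q$ uses at least $2k$ edges of $\bdx{G_\ell}A$ for every aligned set $A\subseteq V(G_\ell-r)$. Aligned sets need not give $3k$-edge-cuts in the basic sense, but I would reduce to the basic case: given an aligned set $A$ avoiding $r$, consider the components of $G_\ell-r$ restricted appropriately, or compare $A$ with the basic cuts that refine it. The key sub-claim is that $A$ still separates two terminals (one inside, one outside) unless $A$ contains no terminal, in which case the bound is vacuous or follows because a $T$-connector must still traverse $A$ to connect terminals on both sides through the gadget containing $A$. Here I would again use that each $Q_i$ crosses such a set an even, hence at least twice, number of times, because $r\notin A$ means both "sides" contain terminals (e.g.\ $r$ on the outside) and $Q_i$ connects them; summing over $i$ gives $\ge 2k$.

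The main obstacle I expect is (i) together with the simultaneity in (ii): it is easy to get $\ge 2k$ crossings of a single cut, but proving that the connectors can be chosen so that $\size{Q\cap R}$ equals exactly $2k$ for every basic cut, and at least $2k$ for every aligned $A\not\ni r$, at the same time, requires a global exchange argument. The natural approach is an extremal/minimality argument: take $Q_1,\dots,Q_k$ minimizing $\size{Q}$ (the total number of edges used), or minimizing $\sum_{R}\size{Q\cap R}$ over basic cuts, subject to being $k$ edge-disjoint $T_\ell$-connectors; then show that any basic cut with $\size{Q\cap R}>2k$ or any failure of (iii) admits a local improvement (a swap along an alternating structure, or the rerouting from Lemma~\ref{l:conn}) that strictly decreases the objective, a contradiction. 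Making precise that such an improving swap always exists — and does not create violations elsewhere — is the technical heart of the argument, and is presumably where the alignment condition and the specific gadget structure of $G_\ell$ (every terminal sitting inside a $K_{2,3k}$ atom with a $K_{3,3k}$ attached) get used to guarantee the necessary internally disjoint path bundles.
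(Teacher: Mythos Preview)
Your plan has a genuine gap: you apply Conjecture~\ref{conj:west-wu} directly to $G_\ell$ and then try to massage the resulting connectors, but the paper's proof does \emph{not} work inside $G_\ell$ at all. Instead it builds an auxiliary graph $H'$ by taking a $3$-connected cubic graph $H$ with more than $2k$ vertices, replacing each edge by $k$ parallel edges, and inserting a copy of $(G_\ell,T_\ell)$ (via the terminal $r$) at every vertex. The conjecture is applied to $H'$; the resulting connectors are chosen to minimise total edge count and, secondarily, to maximise the number of edges inside atoms. The second criterion forces each $Q'_i$ to contain, inside every atom $X$, a length-$2$ path joining the two terminals of $X$; hence at least $k$ of the $3k$ edges of $\partial X$ are unused, i.e.\ $d_{Q'}(X)\le 2k$. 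A global degree count ($\sum d_{\tilde Q}(v)=2k(n-1)$) then shows that at most $2k$ atoms in all of $H'$ can have $d_{Q'}<2k$. Since $H'$ contains more than $2k$ copies of $G_\ell-r$, some copy $G^w$ has $d_{Q'}(X)=2k$ for \emph{every} atom $X$ in it. Contracting the complement of $V(G^w)$ to a single vertex recovers $G_\ell$, and the restricted connectors satisfy (i)--(iii) automatically: (ii) follows from the exact atom degrees by a second counting argument (Claim~\ref{cl:large}), (i) from (ii) together with the degree bound on $X_0$, and (iii) from equality in (ii) for $Y=V(G^w)$.

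Your direct approach cannot reproduce this. First, the parity claim that each $Q_j$ crosses a basic cut an even number of times is false: a $T$-path with one endpoint on each side crosses once, and a minimal $T$-connector (a tree) can meet a basic cut in a single edge. Second, even after minimising $|Q|$, the degree-sum argument inside $G_\ell$ alone leaves room for up to $2k$ ``deficient'' atoms with $d_Q<2k$, and there is no local rerouting that removes them --- this slack is inherent in a single copy. The embedding-into-many-copies and pigeonhole step is the missing idea, and it is what simultaneously delivers (i), (ii) and (iii) without any surgery at $r$.
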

\begin{proof}
  Let $H$ be a $3$-connected cubic graph with more than $2k$
  vertices. Replace each edge of $H$ with $k$ parallel edges and
  regard all vertices of the resulting $3k$-regular graph $kH$ as
  terminals. Let ($H',T')$ be obtained by inserting, one by one, a
  copy of $(G_\ell,T_\ell)$ at each vertex of $kH$. The terminal $r$
  of $G_\ell$ from the statement of the lemma is used for the
  insertions, while the bijection between the edge sets is
  arbitrary. The copy of $G_\ell-r$ in $H'$ created by the insertion
  of $G_\ell$ at $w\in V(kH)$ will be denoted by $G^w$. Furthermore,
  in what follows, any atom in each such copy of $G_\ell-r$ will
  be referred to as an atom of $H'$.

  \begin{claim}
    The set $T'$ is $3k$-edge-connected in $H'$.
  \end{claim}
  \begin{claimproof}
    This follows directly from Lemma~\ref{l:conn} since $kH$ is
    clearly $3k$-edge-connected and $T_\ell$ is $3k$-edge-connected in
    $G_\ell$ by Observation~\ref{obs:t-l-conn}.
  \end{claimproof}
  
  Assuming the truth of Conjecture~\ref{conj:west-wu} for $k$, suppose
  that there exist $T'$-connectors $Q'_1,\dots,Q'_k$ in $H'$. Let these
  $T'$-connectors be chosen so as to
  \begin{enumerate}[label=(Q\arabic*)]
  \item minimise the total number of edges of $Q' := Q'_1\cup\dots\cup Q'_k$,
  \item subject to (Q1), maximise the total number of edges of $Q'$
    with both endvertices in the same atom.
  \end{enumerate}
  Observe that by minimality with respect to (Q1) and by the fact that
  each non-terminal in $H'$ has degree $3$, each $Q'_i$ is a tree.

  \begin{claim}\label{cl:atoms-conn}
    For any $i\in [k]$ and any atom $X$, the induced subgraph
    $Q'_i[X]$ contains a path joining the two terminals in $X$.
  \end{claim}
  \begin{claimproof}
    We prove the claim by contradiction. Suppose that the two
    terminals $x,y\in X$ are not joined by a path in $Q'_i[X]$. Let
    $P$ be the unique path in the tree $Q'_i$ joining $x$ to $y$, and
    let $P_0$ be the subpath of $P$ starting at $x$ and ending at the
    first terminal different from $x$. Let $u$ be the non-terminal
    adjacent to $x$ on $P_0$, and let $Q''_i$ be obtained by removing
    the edges of $P_0$ from $Q'_i$ and adding the edges
    $xu,uy$. Clearly, $Q''_i$ is a tree and a $T'$-connector; since
    the length of $P_0$ is at least $2$, $Q''_i$ has at most as many
    edges as $Q'_i$, and has more edges with both endvertices in $X$
    than $Q'_i$ does, contradicting the choice of $Q'_1,\dots,Q'_k$
    with respect to (Q2).
  \end{claimproof}
  
  \begin{claim}\label{atoms:deg}
    If $X$ is an atom of $H'$, then $d_{Q'}(X) \leq 2k$.
  \end{claim}
  \begin{claimproof}
    Let $x,y$ be the terminals of $X$. Since each $Q'_i$ ($i\in [k]$)
    contains an $xy$-path of length two by Claim~\ref{cl:atoms-conn}, at least $k$ of the edges in
    $\bdx{H'}X$ are not used by $Q'$. This proves the claim since
     $d_{H'}(X)=3k$.
  \end{claimproof}

  In the proofs of the following two claims, we will consider a
  modification of the connectors $Q'_i$. For $i\in [k]$, let
  $\tilde Q_i$ be the multigraph obtained from $Q'_i$ by the following
  procedure:
  \begin{itemize}
  \item identify all the vertices of each atom $X$ to a single vertex
    $v_X$ (keeping any parallel edges and discarding loops),
  \item suppress any remaining non-terminals whose degree in $Q'_i$ is
    $2$ and delete those whose degree in $Q'_i$ is $0$.
  \end{itemize}
  Note that all the multigraphs $\tilde Q_i$ are trees on the same
  vertex set, namely the set of vertices $v_X$ corresponding to the
  atoms $X$ of $H'$. Let $\tilde Q$ be the union of all $\tilde Q_i$
  over $i\in [k]$.
  
  \begin{claim}\label{cl:atom-2k}
    There is a vertex $w\in V(kH)$ such that if $X$ is any atom of
    $H'$ contained in $V(G^w)$, then $d_{Q'}(X) = 2k$.
  \end{claim}
  \begin{claimproof}
    Let $n$ be the number of vertices of the multigraph $\tilde
    Q$ defined above. Since $\tilde Q$ is the union of $k$ edge-disjoint trees on
    these $n$ vertices, it has $k(n-1)$ edges and therefore
    \begin{equation}\label{eq:lose-2k}
      \sum_{t\in V(\tilde Q)} d_{\tilde Q}(t) = 2k(n-1).
    \end{equation}
    On the other hand, Claim~\ref{atoms:deg} implies that for each
    $t\in V(\tilde Q)$, $d_{\tilde Q}(t) \leq 2k$. In view
    of~\eqref{eq:lose-2k}, there can be at most $2k$ vertices of
    $\tilde Q$ whose degree in $\tilde Q$ is strictly smaller than
    $2k$. Since $\size{V(H)} > 2k$, where $H$ is the cubic graph used
    to construct $H'$, the claim follows.
  \end{claimproof}
  
  \begin{claim}\label{cl:large}
    Let $Y$ be an aligned subset of $V(G^w)$, where $w$ is as in
    Claim~\ref{cl:atom-2k}. For $i\in [k]$, let $\kappa_i$ be the
    number of components of $Q'_i[Y]$ containing at least one edge
    each. Then
    \begin{equation}\label{eq:kappa}
      d_{Q'}(Y) = 2\cdot\sum_{i=1}^k\kappa_i.
    \end{equation}
    In particular, $d_{Q'}(Y) \geq 2k$.
  \end{claim}
  \begin{claimproof}
    Consider the multigraph $\tilde Q$ and let $D$ be the sum of
    $d_{\tilde Q}(X)$, taken over all atoms $X$ with $X\subseteq
    Y$. By Claim~\ref{cl:atom-2k}, $D = 2kn$, where $n$ is the number
    of such atoms. We evaluate $D$ in another way.

    Fix $i\in [k]$ and let $\delta_i$ be the number of edges of
    $\tilde Q_i$ with one endvertex corresponding to an atom in $Y$
    and one to an atom outside $Y$. Observe that
    $\sum_{i=1}^k \delta_i = d_{Q'}(Y)$. Furthermore, the number of
    edges of $\tilde Q_i$ with both endvertices corresponding to atoms in $Y$ is
    $n-\kappa_i$. It follows that
    \begin{equation*}
      D = \sum_{i=1}^k \Bigl(2\cdot (n - \kappa_i) + \delta_i\Bigr) =
      2kn - 2\sum_{i=1}^k \kappa_i + d_{Q'}(Y).
    \end{equation*}
    Since $D = 2kn$, equation~\eqref{eq:kappa} follows. In addition,
    since $\kappa_i \geq 1$ for each $i\in [k]$, the right hand side
    of~\eqref{eq:kappa} is greater than or equal to $2k$. This proves
    the claim.
  \end{claimproof}

  To obtain the sought $T_\ell$-connectors in $G_\ell$, take $w$ as in
  Claim~\ref{cl:atom-2k} and contract the complement of $V(G^w)$ in
  $H'$ to a single terminal $r$ of degree $3k$. The resulting graph
  with terminals is a copy of $(G_\ell,T_\ell)$. To simplify the
  notation, we will in fact identify it with $(G_\ell,T_\ell)$ in the
  rest of this proof. Note that by the construction of $H'$, the
  vertex $r$ obtained by the contraction is precisely the terminal of
  $G_\ell$ selected in the statement of the lemma.

  Define $Q_1,\dots,Q_k$ as the subgraphs of $G_\ell$ obtained from
  $Q'_1,\dots,Q'_k$ by this contraction (note that these are not
  necessarily trees) and let $Q$ denote their union. It is easy to see
  that each $Q_i$ ($i\in [k]$) is a $T_\ell$-connector in $G_\ell$.
  
  \begin{claim}\label{cl:basic}
    The graph $Q$ contains exactly $2k$ edges from each basic edge-cut
    in $G_\ell$.
  \end{claim}
  \begin{claimproof}
    Given a basic edge-cut $R$ in $G_\ell$ for a set $X_0$ of
    non-terminals, let $\PP$ be the partition of
    $V(G_\ell)\setminus X_0$ as in the definition of basic
    edge-cut, and let $p = \size{\PP}$. For any $X\in\PP$, the set
    $\bdx{G_\ell}X$ contains $3k$ edges, each of which is
    incident with a degree $3$ vertex in the independent set $X_0$. It
    follows that $\size{X_0} = kp$.

    We claim that for any $Y\in\PP$, $d_Q(Y)\geq 2k$. If $r\notin Y$,
    then this follows from Claim~\ref{cl:large} since $Y$ is then an
    aligned set of vertices of $G^w$, and we have
    $d_Q(Y) = d_{Q'}(Y)$. On the other hand, if $r\in Y$, then we
    apply Claim~\ref{cl:large} to the complement of $Y$ in
    $V(G_\ell)$ (which is also aligned) and get the same conclusion.

    Consequently, we obtain $\sum_{X\in\PP} d_Q(X) \geq 2kp$. On the
    other hand, each of the $kp$ vertices of $X_0$ has degree at most
    $2$ in $Q$, so equality holds and $d_Q(X) = 2k$ for any
    $X\in\PP$. This implies the claim.
  \end{claimproof}

  Properties (i) and (ii) in the lemma follow from
  Claim~\ref{cl:basic} and Claim~\ref{cl:large}, respectively. It
  remains to prove property (iii).

  We first prove that $Z := \bdx{G_\ell}{\Setx r}$ is a basic edge-cut
  in $G_\ell$. Let $N$ be the non-hamiltonian cubic bipartite graph
  used to construct $G_\ell$. Recall that $N$ has colour classes $X$
  and $Y$ and during the construction, each vertex of $Y$ is replaced
  by an independent set of non-terminals of degree $3$ and a copy of
  $F^*_\ell$ is inserted at each of the remaining vertices. We define
  $X_0$ as the set of non-terminals arising from the former operation,
  and observe that $Z$ satisfies the properties of a basic edge-cut
  for $X_0$.

  By Claim~\ref{cl:basic}, $Q$ contains $2k$ edges of $Z$. Thus,
  Claim~\ref{cl:large} (used with $Y = V(G^w)$) implies that for each
  $i\in [k]$, the induced subgraph of $Q'_i$ on $V(G_\ell)$ must have
  exactly one component --- that is, $Q_i-r$ is connected.
\end{proof}

%%%%%%%%%%%%%%%%%%%%%%%%%%%%%%%%%%%%%%%%%%%%%%%%%%%%%%%%%%%%%%%%%%%%%% 

\section{Proof of Theorem~\ref{t:main}}
\label{sec:proof}

In this section, we prove our main result.

\renewcommand{\thetheorem}{1.2}
\begin{theorem}
  For any integer $\ell \geq 0$, there are infinitely many graphs with
  terminals $(G,T)$ such that
  \begin{enumerate}[label=(\arabic*)]
  \item $T$ is $3\cdot 2^\ell$-edge-connected in $G$, and
  \item there do not exist $2^\ell$ edge-disjoint $T$-connectors in
    $G$.
  \end{enumerate}
\end{theorem}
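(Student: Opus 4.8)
The plan is to prove the theorem by combining the structural reduction of Lemma~\ref{l:deg-2k} with the non-hamiltonicity of the cubic bipartite graph $N$ used in Step~5 of the construction. Suppose for contradiction that for some fixed $\ell\geq 0$ and $k=2^\ell$, Conjecture~\ref{conj:west-wu} holds for this $k$; equivalently, suppose that for infinitely many of the graphs $(G_\ell,T_\ell)$ produced by the construction there exist $k$ pairwise edge-disjoint $T_\ell$-connectors. Fix one such $G_\ell$ and pick any terminal $r\in T_\ell$. By Lemma~\ref{l:deg-2k} we may choose edge-disjoint $T_\ell$-connectors $Q_1,\dots,Q_k$ with $Q=\bigcup_i Q_i$ satisfying properties (i)--(iii): $Q$ meets every basic edge-cut in exactly $2k$ edges, $Q$ meets $\bdx{G_\ell}A$ in at least $2k$ edges for every aligned $A\subseteq V(G_\ell-r)$, and each $Q_i-r$ is connected.

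Next I would trace these constraints down through the layered structure $K_{3,3k}\to N'\to F^*_\ell\to F^*_{\ell-1}\to\dots$. The atoms contribute rigid local constraints: by Claim~\ref{cl:atoms-conn} and Claim~\ref{atoms:deg} (which hold verbatim inside $G_\ell$ once we reinterpret $H'$ as $G_\ell$) each $Q_i$ uses a two-edge path through each atom, and each basic edge-cut is tight. The key step is to show that these tightness conditions force $Q$, when pushed down to the graph $N$, to trace out a structure that can only be realized if $N$ has a Hamilton cycle. Concretely, in $N'$ each terminal $x\in X$ has its $3k$ incident edges grouped into three bundles of $k$ edges, one per neighbour in $N$; property (i) applied to the basic cut around each atom and each gadget copy, together with property (iii), should show that at exactly two of the three bundles at $x$ does $Q$ use all $k$ edges, and at the third bundle $Q$ uses exactly $0$ edges — in other words, $Q$ selects, at each $x\in X$, a pair of the three edges of $N$ at $x$. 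Contracting the $k$-fold bundles and the inserted gadgets, $Q$ projects to a subgraph of $N$ in which every vertex of $X$ has degree $2$; since $N$ is cubic bipartite, every vertex of $Y$ then also has degree $2$ (using property (ii) for the aligned sets around the $Y$-blobs to rule out degree $0$ or $1$ there). Finally, connectivity of each $Q_i-r$ promotes this $2$-regular subgraph to a single cycle through all of $V(N)$, i.e. a Hamilton cycle of $N$, contradicting the choice of $N$.

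The main obstacle I anticipate is the middle step: carefully propagating the "exactly $2k$ in every basic edge-cut" and "at least $2k$ across every aligned cut" conditions through the nested gadgets $F^*_i$ so as to pin down, at the level of $N$, that exactly two of the three edge-bundles at each terminal are fully used and the third is completely avoided. This requires a combinatorial analysis of how a $2k$-edge "flow" can enter a copy of $F^*_\ell$ at its root and be distributed among the output edges and the recursive sub-copies, exploiting the specific class sizes $\tfrac32k$, $\tfrac32k-2^i$, $2^i$ chosen in Steps~1--4 — in particular the fact that a class $C$ of size a power of two and a class $A$ of size $\tfrac32k$ cannot both be "half-used" in an integral way compatible with connectivity. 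Once the projection to a $2$-regular spanning subgraph of $N$ is established, upgrading it to a Hamilton cycle via property (iii) and the bipartiteness of $N$ is routine, and the contradiction with the non-hamiltonicity of $N$ (which holds for infinitely many such $N$, giving infinitely many $(G,T)$) completes the proof.
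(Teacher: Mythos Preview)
Your outline is correct and follows essentially the same route as the paper: invoke Lemma~\ref{l:deg-2k} to normalise the connectors, establish an all-or-nothing behaviour of $Q$ on the three edge-bundles leaving each inserted copy of $F^*_\ell$, and conclude that $Q$ projects to a $2$-regular connected spanning subgraph of $N$, contradicting non-hamiltonicity. The step you correctly flag as the main obstacle is precisely the content of the paper's Claims~\ref{cl:cont-disj} and~\ref{cl:cont-disj-l}; one small correction is that the mechanism there is not an integrality obstruction coming from the power-of-two class sizes, but rather that the terminals of $F_i$ with only two neighbours (such as $v''_1$, $u^A_1$) combined with properties~(i) and~(ii) force each internal link to carry exactly $k$ edges of $Q$, after which the inductive hypothesis on the sub-copies of $F^*_{i-1}$ and the degree-$0$-or-$2$ constraint at non-terminals yield the all-or-nothing conclusion on the output edges.
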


\begin{proof}
  \setcounter{cclaim}0 Let $\ell\geq 0$ be given and let $k =
  2^\ell$. Let $(G_\ell,T_\ell)$ be the graph with terminals
  constructed in Section~\ref{sec:construction}. The construction uses
  a $3$-connected non-hamiltonian bipartite cubic graph $N$;
  by~\cite{EH}, there are infinitely many different choices of $N$,
  each producing a different graph $G_\ell$. Therefore, if we can show
  that $(G_\ell,T_\ell)$ obtained from a particular $N$ is a
  counterexample, then this implies an infinite class of
  counterexamples.

  Recall that $T_\ell$ is $3k$-edge-connected in $G_\ell$. Assuming
  that $G_\ell$ admits $k$ edge-disjoint $T_\ell$-connectors, let
  $Q_1,\dots,Q_k$ be $T_\ell$-connectors obtained using
  Lemma~\ref{l:deg-2k}, and let $Q$ be their union.

  Let us first assume that $\ell=0$ and therefore $k=1$, so the
  construction of Section~\ref{sec:construction} is much simplified,
  as described at the end of that section. In this case, we have
  $Q=Q_1$, where $Q_1$ is a $T_0$-connector. By Lemma~\ref{l:deg-2k}
  and Observation~\ref{obs:basic}, $Q$ contains exactly two edges in
  each set $\bdx{G_0}X$, where $X$ is an atom. By contracting each
  atom to a vertex (reversing step 6 of the construction), we thus
  change $Q$ to a $2$-regular spanning subgraph of $N$; since $Q$ is
  connected, it is a Hamilton cycle of $N$, contradicting the choice
  of $N$.

  In the rest of the proof, we assume that $\ell \geq 1$. We start by
  proving that certain edge-cuts in the graphs $(F_i,T_i)$ correspond
  to basic edge-cuts in $(G_\ell,T_\ell)$. Let $\widehat F$ be any
  specific copy of $F^*_i$ (with the root deleted and atoms inserted
  at each remaining terminal) in $G_\ell$. If $X$ is a set of vertices
  of $F_i$, then $\bdx{F_i}X$ does not exist in $G_\ell$ because of
  the insertions in steps 3 and 6 of the construction of $G_\ell$ in
  Section~\ref{sec:construction}. On the other hand, there is a
  well-defined set of edges of $G_\ell$ corresponding to $\bdx{F_i}X$
  in $\widehat F$, which we will denote by $\delta(X)$. (Strictly
  speaking, some of these edges will only have one endvertex in
  $\widehat F$ if they correspond to edges incident with the root of
  $F_i$.)

  We let $\varepsilon(X)$ denote the set of edges of $G_\ell$
  corresponding, in the same fashion, to the edges of the induced
  subgraph $F_i[X]$ on $X$. It will also be convenient to define
  \begin{equation*}
    \delta_Q(X) = \delta(X) \cap E(Q)\text{\quad and\quad}
    \varepsilon_Q(X) = \varepsilon(X) \cap E(Q).
  \end{equation*}
  In case that $X$ contains only one or two vertices, we omit the set
  brackets in these expressions, writing just $\delta(v)$ or
  $\varepsilon_Q(v,w)$.

\begin{claim}\label{cl:deg-Fi}
  Let $z$ be any terminal of $F_i$ (where $0\leq i\leq\ell-1$). Then
  $\delta(z)$ (as defined for the specific copy $\widehat F$ of
  $F^*_i$) is a basic edge-cut in $G_\ell$.
\end{claim}
\begin{claimproof}
  The claim is easy for a terminal $z$ such as $u^A_1$ or $v_1$, where
  the only insertion during the construction of $G_\ell$ is in step 6
  of the construction of $G_\ell$, namely the insertion of a
  $K_{3,3k}$ at $z$, creating an atom $X$. Now
  $\delta(z) = \bdx{G_\ell}X$, and by Observation~\ref{obs:basic},
  $\bdx{G_\ell}X$ is basic.

  This leaves only two cases to discuss: $f_1$ (and $f_2$, which is
  symmetric), and the root terminal $r$ of $F_i$. Suppose first that
  $z=r$. In the definition of basic edge-cut, choose $X_0$ as the set
  of all $3k$ non-terminals of $\widehat F$ corresponding to the
  non-terminals in $F_i$; it is easy to check that the requirements
  are fulfilled.

  Suppose next that $z=f_1$. If $i=0$, then the situation is similar
  as for $u^A_1$ above (the only insertion at $z$ creates an
  atom). Suppose that $i > 0$ consider the insertion of a copy of
  $F^*_{i-1}$ at $f_1$ in step 3. Let $r'$ denote the root of
  $F_{i-1}$, and define $\delta(r')$ as above, with respect to this
  insertion. Then $\delta(f_1) = \delta(r')$, which reduces the
  problem to the preceding situation. The case $z=f_2$ is symmetric.
\end{claimproof}

\begin{claim}\label{cl:cont-disj}
  Let $i\in [\ell-1]$ and let $\widehat F$ be a copy of $F^*_i$ (with
  the root $r$ deleted and atoms inserted at each remaining terminal)
  in $G_\ell$. Let $C$ be the set of all edges in
  $\bdx{G_\ell}{V(\widehat F)}$ corresponding to output edges of
  $F_i$.Then
  \begin{equation*}
    C\subseteq Q\text{\quad or\quad} C\cap Q = \emptyset.
  \end{equation*}
\end{claim}
\begin{claimproof}
  We prove this claim by induction on $i$. It is vacuous for $i=0$, so
  let us assume that $i > 0$. We refer to Figure~\ref{fig:fi} for the
  notation.

  By Claim~\ref{cl:deg-Fi} and property (i) of Lemma~\ref{l:deg-2k},
  \begin{equation}
    \label{eq:1}
    |\delta_Q(v'_1)| = |\delta_Q(v''_1)| = |\delta_Q(u^B_1)| = 2k.
  \end{equation}
  
  We want to show that
  \begin{equation}\label{eq:2}
    \size{\varepsilon_Q(v'_1,v''_1)} = k.
  \end{equation}
  Suppose that this is not the case and assume first that
  $\size{\varepsilon_Q(v'_1,v''_1)} < k$. Then $\size{\varepsilon_Q(v''_1,u^B_1)} > k$
  by~\eqref{eq:1}, so $|\delta_Q(u^B_1)\setminus\varepsilon_Q(v''_1,u^B_1)| < k$ again
  by~\eqref{eq:1}. Therefore,
  \begin{equation*}
    |\delta_Q(v''_1,u^B_1)| = |\varepsilon_Q(v'_1,v''_1)| +
    |\delta_Q(u^B_1)\setminus\varepsilon_Q(v''_1,u^B_1)| < 2k,
  \end{equation*}
  contradicting property (ii) of Lemma~\ref{l:deg-2k}. It follows that
  $\size{\varepsilon_Q(v'_1,v''_1)} > k$. Then $|\varepsilon_Q(v''_1,u^B_1)| < k$
  and we obtain an analogous contradiction for the set
  $\delta_Q(v'_1,v''_1)$. This proves~\eqref{eq:2}.

  Observe that the same argument proves~\eqref{eq:2} with $v''_1$
  replaced by any other terminal having only two neighbours in $F_i$
  and with $v'_1$ replaced by any of these neighbours.

  Next, we show
  \begin{equation}
    \label{eq:3}
    |\delta_Q(f_1)\setminus \delta_Q(u^A_1,u^B_1)| = |\delta_Q(u^B_1)\setminus \delta_Q(f_1,v''_1)|.
  \end{equation}
  Indeed, $|\delta_Q(f_1)| = |\delta_Q(u^B_1)| = 2k$ by Claim~\ref{cl:deg-Fi} and
  property (i) of Lemma~\ref{l:deg-2k}, and
  $|\varepsilon_Q(f_1,u^A_1)| = |\varepsilon_Q(u^B_1,v''_1)| = k$ by the
  extension of~\eqref{eq:2} mentioned above. This implies that
  $|\delta_Q(f_1)\setminus \delta_Q(u^A_1)| =
  |\delta_Q(u^B_1)\setminus \delta_Q(v''_1)| = k$,
  and~\eqref{eq:3} follows since it just refers to removing the same
  subset $\varepsilon_Q(f_1,u^B_1)$ from each of the sets in this
  equation.

  Let us return to the inductive proof of Claim~\ref{cl:cont-disj}. By
  the induction hypothesis, either all output edges of the copy of
  $F^*_{i-1}$ inserted at $f_1$ are contained in $Q$, or none of them
  are. In other words,
  \begin{equation}
    \label{eq:4}
    |\delta_Q(f_1)\setminus \delta_Q(u^A_1,u^B_1)| \in \Setx{0,2^{i-1}}.
  \end{equation}
  Thus, the two sides of~\eqref{eq:3} sum up to $0$ or $2^i$. By the
  left-right symmetry of Figure~\ref{fig:fi}, all of the above works
  with the subscript $1$ replaced by $2$; in particular, for $j =
  1,2$,
  \begin{equation}
    |\delta_Q(f_j)\setminus \delta_Q(u^A_j,u^B_j)| + |\delta_Q(u^B_j)\setminus
    \delta_Q(f_j,v''_j)| \in \Setx{0,2^i}.\label{eq:5}  
  \end{equation}

  Observe that if we know the value of the left hand side
  of~\eqref{eq:5} for $j=1,2$, we can infer the number of edges in
  $C\cap Q$. This is because $Q$ has degree $0$ or $2$ on every
  non-terminal of $G_\ell$. Thus, an edge $e\in C$ is contained in $Q$
  if and only if exactly one of the two edges incident with $e$ at the
  non-terminal vertex is contained in $Q$. In particular,~\eqref{eq:5}
  implies that the number of edges in $C\cap Q$ is either $0$ or $2^i$
  as claimed.
\end{claimproof}

A similar statement can be derived for the graph $F_\ell^*$.

\begin{claim}
  \label{cl:cont-disj-l}
  Let $\widehat F$ be a copy of $F^*_\ell$ (with the root $r$ deleted
  and atoms inserted at each remaining terminal) in $G_\ell$. Given
  $p,q$, where $1\leq p < q \leq 3$, let
  $C = \varepsilon_Q(r,f_{pq})\cup \varepsilon_Q(r,f_{qp})$ (where $r$, $f_{pq}$ and
  $f_{qp}$ are vertices of $F_\ell$, see Figure~\ref{fig:fl}). Then
  \begin{equation*}
    C\subseteq Q\text{\quad or\quad} C\cap Q = \emptyset.
  \end{equation*}
\end{claim}
\begin{claimproof}
  For simplicity, let us consider the case $(p,q) = (1,2)$. Similarly
  to~\eqref{eq:1}, we can show that
  $|\delta_Q(w_1)| = |\delta_Q(f_{12})| = |\delta_Q(f_{21})| = 2k$. Furthermore,
  $|\varepsilon_Q(w_1,f_{12})| = |\varepsilon_Q(w_2,f_{21})| = k$ is
  proved analogously to~\eqref{eq:2}. By combining these facts, we
  infer that
  \begin{equation}
    \label{eq:6}
    |\varepsilon_Q(r,f_{12})| = |\varepsilon_Q(r,f_{21})|.
  \end{equation}
  We know from Claim~\ref{cl:cont-disj} (applied to the copies of
  $F^*_{\ell-1}$ inserted at $f_{12}$ and $f_{21}$, respectively) that
  the set on the left hand side of~\eqref{eq:6} is empty or contains
  all edges of $G_\ell$ in $\varepsilon(r,f_{12})$. Thus,
  the present claim follows from~\eqref{eq:6} and symmetry.
\end{claimproof}

We have proved that for each $p,q$, where $1\leq p < q \leq 3$, $Q$
contains $0$ or $k$ edges in
$\varepsilon_Q(r,f_{pq})\cup \varepsilon_Q(r,f_{qp})$. Recall that this set of
edges corresponds to a single edge $e=xy$ of the cubic bipartite graph
$N$ used in step 5 of the construction of $G_\ell$ (where $x\in X$ is
a vertex corresponding to the given copy of $F^*_\ell$ in $G_\ell$,
and $y\in Y$ corresponds to an independent set of $k$ non-terminals in
$G_\ell$). Since $Q$ either contains all edges of $G_\ell$
corresponding to $e$ or none (for each $e$) we can associate with $Q$
a subgraph $Q_N$ of $N$. This subgraph is obtained from $Q$ by
identification of vertices, so $Q_N$ is a connected spanning subgraph
of $N$.

We will prove that $Q_N$ is $2$-regular. We claim that the edge-cut
$\bdx{G_\ell}{V(\widehat F)}$ is basic. To see this, recall that uses
a cubic graph $N$ with colour classes $X$ and $Y$; let $X_0$ be the
set of all non-terminals of $G_\ell$ coming from the vertices in $Y$,
and check that the said edge-cut is basic for $X_0$.

Lemma~\ref{l:deg-2k} therefore implies that $Q$ contains exactly $2k$
edges of $\bdx{G_\ell}{V(\widehat F)}$, and since $\widehat F$ is
arbitrary, $Q_N$ is $2$-regular. Being connected and spanning in $N$
at the same time, $Q_N$ is a Hamilton cycle of $N$. This contradicts
the choice of $N$ and finishes the proof of Theorem~\ref{t:main}.
\end{proof}

%%%%%%%%%%%%%%%%%%%%%%%%%%%%%%%%%%%%%%%%%%%%%%%%%%%%%%%%%%%%%%%%%%%%%% 

\section{The case of even $k$}
\label{sec:even}

In this section, we outline the changes that need to be made when $k$
is an arbitrary even positive integer (not necessarily a power of
two). Define
\begin{equation*}
  \ell = \lceil\log_2 k\rceil \text{\quad and \quad} x =
  2^{\ell-1}-\frac k2.
\end{equation*}
(In the preceding situation where $k$ is a power of two, the
definition of $\ell$ is consistent and $x$ is zero.) Note that
$2^{\ell-1} < k \leq 2^\ell$ and consequently $0\leq x < \frac k2$.

We are constructing a graph with terminals $(G,T)$, where $T$ is
$3k$-edge-connected in $G$ and no $3k$ edge-disjoint $T$-connectors
exist.

Steps 1--3 of the construction in Section~\ref{sec:construction} are
carried out with a single minor change: if $k$ is not divisible by
$4$, then the multiplicities $\frac34 k$ and $\frac34 k-2^{i-1}$
appearing in the graph with terminals $(F_i,T_i)$ in
Figure~\ref{fig:fi} (each twice), have to be rounded. We do this in
such a way that the vertex $v_1$ of $F_i$ is adjacent to
$\lceil\frac34 k\rceil$ non-terminals incident with class $A$ edges
and to $\lfloor\frac34 k-2^{i-1}\rfloor$ non-terminals incident with
class $B$ edges, and the rounding is performed in the opposite way for
the non-terminals adjacent to $v'_1$. In particular, the number of
class $A$ (class $B$) edges of $F_i$ is $\frac32k$ ($\frac32k-2^i$,
respectively), and each of $v_1, v'_1, v_2$ and $v'_2$ has degree
exactly $3k$ in $F_i$ as required.

As a result, we have the graph $F^*_{\ell-1}$ with terminals
$T^*_{\ell-1}$ at our disposal. The number of class $C$ edges incident
with the root in this graph (cf. Figure~\ref{fig:fi} for a picture of
$F_{\ell-1}$ is
\begin{equation*}
2(2^{\ell-1-1}) = 2^{\ell-1} = \frac k2+x.
\end{equation*}
Furthermore, the number of class $A$ and $B$ edges is $\frac{3k}2$ and
$k-x$, respectively.

In step 4 of the construction, we use the graph $(F_\ell,T_\ell)$ as
before, with edge multiplicities as shown in Figure~\ref{fig:fl}
(namely $\frac k2$, $\frac{3k}2$ and $k$, respectively). We need to be
careful when inserting copies of $F^*_{\ell-1}$ at each vertex
$f_{pq}$, where $1\leq p < q\leq 3$: while the number of class $A$
edges in $F^*_{\ell-1}$ is right (namely $\frac{3k}2$), there are too
few class $B$ edges and too many class $C$ edges. Let us say we are
inserting $F^*_{\ell-1}$ at $f_{12}$. We single out a set $X$ of $x$ class
$C$ edges (incident with the root $r$) in the given copy of
$F^*_{\ell-1}$, and perform the insertion using a bijection $h$
satisfying the following properties:
\begin{itemize}
\item $h$ maps the edges incident with the root $r$ of $F_\ell$ to the
  edges incident with the root of $F^*_{\ell-1}$ (of class $A$, $B$ or
  $C$),
\item $h$ maps the $\frac{3k}2$ edges between $r$ and $f_{12}$ to the
  class $A$ edges of $F^*_{\ell-1}$,
\item $h$ maps the $k$ edges between $f_{12}$ and $f_{21}$ to the
  class $B$ edges of $F^*_{\ell-1}$ together with the class $C$ edges
  in $X$,
\item $h$ maps the $\frac k2$ edges between $f_{12}$ and $w_1$ to the
  class $C$ edges of $F^*_{\ell-1}$ not in $X$.
\end{itemize}
The same procedure is performed for each of the sets $f_{pq}$ in turn.

Steps 5--6 are copied verbatim, except that the resulting graph with
terminals is denoted by $(G,T)$ rather than $(G_\ell,T_\ell)$. Note
that the analogue of Observation~\ref{obs:t-l-conn} applies, and the
subsequent proof goes through essentially without change.

%%%%%%%%%%%%%%%%%%%%%%%%%%%%%%%%%%%%%%%%%%%%%%%%%%%%%%%%%%%%%%%%%%%%%% 

% \section*{Acknowledgment}
% We thank the reviewers...

\end{document}